\theoremstyle{plain} 
\newtheorem{theorem}{Theorem} 
\newtheorem{lemma}[theorem]{Lemma}
\theoremstyle{remark} 
\newtheorem*{remark}{Remark} 
\newtheorem*{question}{Question}
\begin{document} 
\title{A converse to the Schwarz lemma for planar harmonic maps} 
\date{\today} 

\author{Ole Fredrik Brevig} 
\address{Department of Mathematics, University of Oslo, 0851 Oslo, Norway} 
\email{obrevig@math.uio.no}

\author{Joaquim Ortega-Cerd\`{a}} 
\address{Department de Matem\`{a}tiques i Inform\`{a}tica, Universitat de Barcelona \& Bar\-ce\-lo\-na Graduate school in mathematics, Gran Via 585, 08007 Barcelona, Spain} 
\email{jortega@ub.edu}

\author{Kristian Seip} 
\address{Department of Mathematical Sciences, Norwegian University of Science and Technology (NTNU), NO-7491 Trondheim, Norway} 
\email{kristian.seip@ntnu.no}

\begin{abstract}
	A sharp version of a recent inequality of Kovalev and Yang on the ratio of the $(H^1)^\ast$ and $H^4$ norms for certain polynomials is obtained. The inequality is applied to establish a sharp and tractable sufficient condition for the Wirtinger derivatives at the origin for harmonic self-maps of the unit disc which fix the origin. 
\end{abstract}

\subjclass[2020]{Primary 30H10. Secondary 42A99, 42B30}

\thanks{Ortega-Cerdà was partially supported by the Generalitat de Catalunya (grant 2017 SGR 358) and the Spanish Ministerio de Ciencia, Innovaci\'on y Universidades (project MTM2017-83499-P). Seip was supported in part by the Research Council of Norway grant 275113}

\maketitle

\section{Introduction} 
Set $\mathbb{D}:=\{z\in\mathbb{C}\,:\,|z|<1\}$. Suppose that $f \colon \mathbb{D}\to \mathbb{D}$ is harmonic and that $f(0)=0$. Harmonic functions satisfy Laplace's equation $ \Delta f = 0$, and we write $\Delta := \partial \overline{\partial}$ for the Wirtinger derivatives
\[\partial := \frac{1}{2}\left(\frac{\partial}{\partial x}-i \frac{\partial}{\partial y}\right) \qquad \text{and} \qquad \overline{\partial} := \frac{1}{2}\left(\frac{\partial}{\partial x}+i \frac{\partial}{\partial y}\right).\]
In analogy with the classical Schwarz lemma for analytic functions, the quantities $|\partial f(0)|$ and $|\overline{\partial}f(0)|$ are of intrinsic interest. 

Set $\mathbb{T}^2:=\{z\in\mathbb{C}^2\,:\,|z_1|=|z_2|=1\}$ and let $m_2$ denote its Haar measure. Every $f$ in $L^p(\mathbb{T}^2)$ can be represented as a Fourier series $f(z) = \sum_{\alpha \in \mathbb{Z}^2} \widehat{f}(\alpha)\,z^\alpha$, where the Fourier coefficients are given by
\[\widehat{f}(\alpha) = \int_{\mathbb{T}^2} f(z)\,\overline{z^\alpha}\,dm_2(z).\]
The Hardy space $H^p(\mathbb{T}^2)$ is the subspace of $L^p(\mathbb{T}^2)$ comprised of functions $f$ such that $\widehat{f}(\alpha)=0$ unless both $\alpha_1,\alpha_2\geq0$.

A (slightly reformulated) recent result of Kovalev and Yang \cite[Thm.~1.1]{KY20} gives a description of the Wirtinger derivatives of harmonic self-maps of $\mathbb{D}$ fixing the origin in terms of the norm of certain linear functionals on $H^1(\mathbb{T}^2)$.
\begin{theorem}[Kovalev--Yang] \label{thm:KYharm} 
	Given $(\alpha,\beta)$ in $\mathbb{C}^2$, the following are equivalent. 
	\begin{enumerate}
		\item[(i)] There is a harmonic $f\colon \mathbb{D}\to\mathbb{D}$ with $f(0)=0$, $\partial f(0)=\alpha$ and $\overline{\partial} f(0)=\beta$. 
		\item[(ii)] $\|\varphi\|_{(H^1(\mathbb{T}^2))^\ast} \leq 1$ for $\varphi(z)=\alpha z_1 + \beta z_2$. 
	\end{enumerate}
\end{theorem}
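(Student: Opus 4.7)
The plan is to reformulate both (i) and (ii) as the same dual inequality in $(\alpha,\beta)$ and then verify that a single explicit $L^1(\mathbb{T})$-norm governs both sides.

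First I would translate (i) into boundary language. A harmonic map $f\colon\mathbb{D}\to\mathbb{D}$ with $f(0)=0$ is the Poisson integral of a boundary function $F\in L^\infty(\mathbb{T})$ with $\|F\|_\infty\leq1$ and $\widehat{F}(0)=0$, and reading off the Fourier series of the harmonic extension gives $\partial f(0)=\widehat{F}(1)$ and $\overline{\partial} f(0)=\widehat{F}(-1)$. So (i) is equivalent to the existence of such an $F$ with $\widehat{F}(\pm1)=(\alpha,\beta)$, which by $L^\infty$--$L^1$ duality on $\operatorname{span}\{e^{-i\theta},1,e^{i\theta}\}$ amounts to
\[
|\alpha\overline{a}+\beta\overline{b}|\leq\inf_{c\in\mathbb{C}}\bigl\|ae^{i\theta}+c+be^{-i\theta}\bigr\|_{L^1(\mathbb{T})}=:N_1(a,b)\qquad\text{for all }(a,b)\in\mathbb{C}^2.
\]
Multiplying the integrand by $e^{i\theta}$ and setting $\omega=e^{i\theta}$ rewrites this as $N_1(a,b)=\inf_c\|b+c\omega+a\omega^2\|_{L^1(\mathbb{T})}$; since $\omega\mapsto-\omega$ preserves the $L^1$-norm but flips the sign of the linear term, the even part $b+a\omega^2$ is at most as large, so the infimum is attained at $c=0$, giving $N_1(a,b)=\|b+a\omega^2\|_{L^1(\mathbb{T})}=\|a+bw\|_{L^1(\mathbb{T})}$ by the measure-preserving substitution $\omega\mapsto\omega^2$.

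Condition (ii) unfolds directly to $|\alpha\overline{a}+\beta\overline{b}|\leq N_2(a,b)$ for all $(a,b)$, where
\[
N_2(a,b):=\inf\bigl\{\|g\|_{H^1(\mathbb{T}^2)}:\widehat{g}(1,0)=a,\;\widehat{g}(0,1)=b\bigr\}.
\]
The bound $N_2(a,b)\leq\|a+bw\|_{L^1(\mathbb{T})}$ follows by testing $g(z_1,z_2)=az_1+bz_2$: writing $|az_1+bz_2|=|a+bz_2\overline{z_1}|$ and substituting $w=z_2\overline{z_1}$ yields $\|g\|_{L^1(\mathbb{T}^2)}=\|a+bw\|_{L^1(\mathbb{T})}$.

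The remaining and main step is the matching lower bound $N_2(a,b)\geq\|a+bw\|_{L^1(\mathbb{T})}$, which I expect to be the main obstacle. Here the idea is a one-parameter slicing. For any $g\in H^1(\mathbb{T}^2)$ with $\widehat{g}(1,0)=a$ and $\widehat{g}(0,1)=b$ and any $\omega\in\mathbb{T}$, the function
\[
G_\omega(\zeta):=g(\zeta,\zeta\omega)=\sum_{m,n\geq0}\widehat{g}(m,n)\,\omega^n\zeta^{m+n}
\]
lies in $H^1(\mathbb{T})$ with $\widehat{G_\omega}(1)=\widehat{g}(1,0)+\widehat{g}(0,1)\omega=a+b\omega$, so $|a+b\omega|\leq\|G_\omega\|_{L^1(\mathbb{T})}$. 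Since the change of variable $(\zeta,\omega)\mapsto(z_1,z_2)=(\zeta,\zeta\omega)$ preserves Haar measure on $\mathbb{T}^2$, integrating in $\omega$ gives $\|a+bw\|_{L^1(\mathbb{T})}\leq\|g\|_{L^1(\mathbb{T}^2)}$. This closes the identity $N_1=N_2=\|a+bw\|_{L^1(\mathbb{T})}$ and hence the equivalence of (i) and (ii).
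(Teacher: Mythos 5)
The paper does not actually prove this statement: it is quoted (in slightly reformulated form) from Kovalev and Yang \cite{KY20}, so there is no internal proof to compare against. Your argument is a correct and essentially self-contained proof. Both conditions are converted by Hahn--Banach duality into the same family of inequalities $|\alpha\overline{a}+\beta\overline{b}|\leq N_i(a,b)$, where $N_1$ comes from prescribing the Fourier coefficients $\widehat{F}(-1)=\beta$, $\widehat{F}(0)=0$, $\widehat{F}(1)=\alpha$ of a boundary function with $\|F\|_{L^\infty(\mathbb{T})}\leq 1$, and $N_2$ is the corresponding $H^1(\mathbb{T}^2)$ coefficient-minimization problem; the two key computations --- that the infimum over the constant term is attained at $c=0$ by averaging the integrand over $\omega\mapsto-\omega$ and applying the triangle inequality, and that the measure-preserving slicing $(\zeta,\omega)\mapsto(\zeta,\zeta\omega)$ forces $\|g\|_{L^1(\mathbb{T}^2)}\geq\|a+bw\|_{L^1(\mathbb{T})}$ --- are both right. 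Two small points deserve to be made explicit, though neither is a gap: first, the converse direction of the boundary-value reduction needs the maximum principle for the subharmonic function $|f|$ together with $f(0)=0$ to guarantee that the Poisson extension maps into the \emph{open} disc; second, for a general (non-polynomial) $g\in H^1(\mathbb{T}^2)$ the identity $\widehat{G_\omega}(1)=a+b\omega$ is cleanest via Fubini applied to $\omega\mapsto\int_0^{2\pi} g(\zeta,\zeta\omega)\overline{\zeta}\,\frac{d\theta}{2\pi}$ with $\zeta=e^{i\theta}$, whose $n$-th Fourier coefficient equals $\widehat{g}(1-n,n)$ and hence vanishes for $n\notin\{0,1\}$ precisely because $g$ lies in $H^1(\mathbb{T}^2)$.
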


Condition (ii) is in general very difficult to check, so more tractable necessary and sufficient conditions in terms of the modulus of $\alpha$ and $\beta$ are desirable. As explained in \cite[Remark~4.2]{KY20}, the following necessary condition follows from Theorem~\ref{thm:KYharm}. If $f\colon\mathbb{D}\to\mathbb{D}$ is harmonic with $f(0)=0$, $\partial f(0)=\alpha$ and $\overline{\partial} f(0)=\beta$, then 
\begin{equation}\label{eq:necharm} 
	\frac{|\alpha|+|\beta|}{2} \leq \frac{2}{\pi}. 
\end{equation}
The goal of the present note is to obtain a sharp and tractable sufficient condition similar to \eqref{eq:necharm}. 
\begin{theorem}\label{thm:suffharm} 
	If $(\alpha,\beta)\in\mathbb{C}^2$ satisfies 
	\begin{equation}\label{eq:suffharm} 
		\left(\frac{|\alpha|^4+4|\alpha\beta|^2 + |\beta|^4}{6}\right)^{\frac{1}{4}} \leq \frac{2}{\pi}, 
	\end{equation}
	then there is a harmonic $f \colon \mathbb{D}\to\mathbb{D}$ with $f(0)=0$, $\partial f(0)=\alpha$ and $\overline{\partial} f(0)=\beta$. 
\end{theorem}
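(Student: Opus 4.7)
The plan is to deduce Theorem~\ref{thm:suffharm} from Theorem~\ref{thm:KYharm} via a sharp inequality relating the $(H^1(\mathbb{T}^2))^\ast$-norm and the $H^4(\mathbb{T}^2)$-norm of polynomials of the form $\varphi(z)=\alpha z_1+\beta z_2$. First I would record the computation
\[
\|\alpha z_1+\beta z_2\|_{H^4(\mathbb{T}^2)}^4 \;=\; \int_{\mathbb{T}^2} |\alpha z_1+\beta z_2|^4\,dm_2 \;=\; |\alpha|^4+4|\alpha\beta|^2+|\beta|^4,
\]
which follows from $|\alpha z_1+\beta z_2|^2 = |\alpha|^2+|\beta|^2+\alpha\bar\beta z_1\bar z_2+\bar\alpha\beta\bar z_1 z_2$ and the fact that only the constant Fourier coefficient survives integration over $\mathbb{T}^2$. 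In view of this identity, hypothesis \eqref{eq:suffharm} is equivalent to the bound $\|\alpha z_1+\beta z_2\|_{H^4(\mathbb{T}^2)} \leq 6^{1/4}\cdot 2/\pi$.

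The core of the argument is then the sharp comparison
\[
\|\alpha z_1+\beta z_2\|_{(H^1(\mathbb{T}^2))^\ast} \;\leq\; \frac{\pi}{2\cdot 6^{1/4}}\,\|\alpha z_1+\beta z_2\|_{H^4(\mathbb{T}^2)},
\]
which, combined with the $H^4$-computation and hypothesis \eqref{eq:suffharm}, gives $\|\alpha z_1+\beta z_2\|_{(H^1(\mathbb{T}^2))^\ast}\leq 1$ and hence, by Theorem~\ref{thm:KYharm}, produces the desired harmonic self-map. The constant $\pi/(2\cdot 6^{1/4})$ should be sharp and attained on the diagonal $|\alpha|=|\beta|$. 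This is corroborated by the classical extremum $F(e^{i\theta})=\operatorname{sign}(\cos\theta)$, for which $\widehat{F}(\pm 1)=2/\pi$: its harmonic extension realises the admissible boundary point $\alpha=\beta=2/\pi$, and at this point \eqref{eq:necharm} and \eqref{eq:suffharm} both hold with equality, forcing $\|z_1+z_2\|_{(H^1(\mathbb{T}^2))^\ast}=\pi/2 = (\pi/(2\cdot 6^{1/4}))\cdot 6^{1/4}$.

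The main obstacle is therefore the proof of the sharp comparison. By duality, the task is to show that for every $g\in H^1(\mathbb{T}^2)$ with $\|g\|_{H^1}\leq 1$ and every $(\alpha,\beta)\in\mathbb{C}^2$,
\[
\bigl|\alpha\,\overline{\widehat{g}(1,0)}+\beta\,\overline{\widehat{g}(0,1)}\bigr| \;\leq\; \frac{\pi}{2\cdot 6^{1/4}}\bigl(|\alpha|^4+4|\alpha\beta|^2+|\beta|^4\bigr)^{1/4}.
\]
The torus-rotational symmetry $(z_1,z_2)\mapsto(e^{i\theta_1}z_1,e^{i\theta_2}z_2)$ reduces us to the case $\alpha,\beta\geq 0$ and $\widehat{g}(1,0),\widehat{g}(0,1)\geq 0$, so that the problem becomes an extremal question about two Fourier coefficients of an $H^1$-function on the bidisc. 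The natural strategy is to exploit the identity $\|\varphi\|_{H^4}^2=\|\varphi^2\|_{H^2}$ (with $\varphi^2\in H^2(\mathbb{T}^2)$ since $\varphi$ is a polynomial) and combine it with a Cauchy--Schwarz estimate for a factorization $g=h_1 h_2$ with $h_1,h_2\in H^2$. The key technical difficulty is that $H^1$ on the bidisc does not admit a strong inner--outer factorization, so a weak factorization or an approximation argument modelled on the extremal $\operatorname{sign}(\cos\theta)$ on the circle will be required to pin down the precise constant $\pi/(2\cdot 6^{1/4})$; this is where I expect the bulk of the work to lie.
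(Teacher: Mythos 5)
Your overall reduction coincides exactly with the paper's proof of Theorem~\ref{thm:suffharm}: compute $\|\alpha z_1+\beta z_2\|_{H^4(\mathbb{T}^2)}^4=|\alpha|^4+4|\alpha\beta|^2+|\beta|^4$ (your computation is correct), combine the hypothesis \eqref{eq:suffharm} with the sharp comparison $\|\varphi\|_{(H^1(\mathbb{T}^2))^\ast}\leq \frac{\pi}{2\sqrt[4]{6}}\,\|\varphi\|_{H^4(\mathbb{T}^2)}$ to get $\|\varphi\|_{(H^1(\mathbb{T}^2))^\ast}\leq 1$, and invoke Theorem~\ref{thm:KYharm}. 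That comparison is precisely the upper bound of Theorem~\ref{thm:KY}, which is stated before Theorem~\ref{thm:suffharm} and proved independently in Section~\ref{sec:KY}; if you are allowed to cite it, your argument is complete and identical to the paper's.

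The gap is in your proposed proof of the comparison inequality itself. Your extremal-function discussion only establishes $\|z_1+z_2\|_{(H^1(\mathbb{T}^2))^\ast}=\pi/2$, i.e.\ the value of the ratio on the diagonal $|\alpha|=|\beta|$; the actual content of the upper bound is that the diagonal \emph{maximizes} the ratio over all $(\alpha,\beta)$, and nothing in your sketch addresses that. The route you propose for the general case --- factor $g=h_1h_2$ with $h_1,h_2\in H^2$ and apply Cauchy--Schwarz --- founders on the point you yourself flag: strong factorization fails in $H^1(\mathbb{T}^2)$, and even a weak factorization would carry a constant that is not $1$, so this approach cannot be expected to recover the sharp constant $\frac{\pi}{2\sqrt[4]{6}}=1.0035\ldots$, which sits extremely close to $1$ and leaves no room for loss. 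The paper's proof of the upper bound is genuinely delicate: it passes by duality to the reflexive two-dimensional subspace $H^1_1(\mathbb{T}^2)$ so that the ratio $\|\varphi\|_{(H^1)^\ast}/\|\varphi\|_{H^4}$ can be traded for $\|\varphi_y\|_{(H^4)^\ast}/\|\varphi_y\|_{H^1}$ (Lemma~\ref{lem:flipy}), solves the nonlinear extremal equation $P(|g|^2g)=C\varphi_y$ explicitly for $p=4$ (Lemma~\ref{lem:xy}), and then proves monotonicity in $y$ of the two factors in \eqref{eq:Fsneak} via an auxiliary polynomial $F$ (Lemmas~\ref{lem:H1inc} and~\ref{lem:PQbounds}, the latter resting on computer-algebra verification of sign patterns of polynomials of degree $46$ and $48$). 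None of this is replaced by your sketch, so as written the key inequality --- and hence the theorem --- remains unproved.
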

\begin{remark}
	If $\alpha=\beta$, then the necessary and sufficient conditions \eqref{eq:necharm} and \eqref{eq:suffharm} coincide, which illustrates that the constant $2/\pi$ cannot be improved in either inequality. {This can also be deduced directly by considering the harmonic function
	\[f(z) = c \operatorname{Arg}\left(\frac{i-z}{i+z}\right)\]
	which maps $\mathbb{D}$ to itself if and only if $|c|\leq 2/\pi$ and which satisfies $|\alpha|=|\beta|=|c|$.} 
\end{remark}

The sufficient condition of Theorem~\ref{thm:suffharm} with $2/\pi=0.6366\ldots$ replaced by the smaller constant $5/(3+2\sqrt{6})=0.6329\ldots$ can be obtained by combining Theorem~\ref{thm:KYharm} and \cite[Thm.~1.2]{KY20}. We similarly obtain Theorem~\ref{thm:suffharm} after establishing the following sharp version of \cite[Thm.~1.2]{KY20}.
\begin{theorem}\label{thm:KY} 
	Suppose that $\varphi(z)=c_1z_1 + c_2 z_2$ for $(c_1,c_2)\neq(0,0)$. Then 
	\begin{equation}\label{eq:KYimp} 
		1 \leq \frac{\|\varphi\|_{(H^1(\mathbb{T}^2))^\ast}}{\|\varphi\|_{H^4(\mathbb{T}^2)}} \leq \frac{\pi}{2\sqrt[4]{6}}=1.0035\ldots. 
	\end{equation}
	Moreover, 
	\begin{enumerate}
		\item[(a)] the upper bound is sharp in the sense that \eqref{eq:KYimp} no longer holds if $\frac{\pi}{2\sqrt[4]{6}}$ is replaced by any smaller number. 
		\item[(b)] the lower bound is sharp in the sense that \eqref{eq:KYimp} no longer holds if $\|\varphi\|_{H^4(\mathbb{T}^2)}$ is replaced by $\|\varphi\|_{H^p(\mathbb{T}^2)}$ for any $p>4$. 
	\end{enumerate}
\end{theorem}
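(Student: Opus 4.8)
The plan is to pass to the predual description of the functional norm and to use a change of variables that reduces the bidisc problem to a one–dimensional extremal problem governed by a single integral. Since $H^1(\mathbb{T}^2)\subset L^1(\mathbb{T}^2)$ and $\varphi$ acts by $g\mapsto\int_{\mathbb{T}^2}\varphi\,\overline{g}\,dm_2$, the Hahn–Banach theorem gives
\[ \|\varphi\|_{(H^1(\mathbb{T}^2))^\ast}=\inf\bigl\{\|\psi\|_{L^\infty(\mathbb{T}^2)}:\widehat{\psi}(\alpha)=\widehat{\varphi}(\alpha)\text{ whenever }\alpha_1,\alpha_2\geq0\bigr\}, \]
whereas testing against a fixed $g\in H^1(\mathbb{T}^2)$ yields the lower bound $\|\varphi\|_{(H^1)^\ast}\geq|\int_{\mathbb{T}^2}\varphi\,\overline{g}\,dm_2|/\|g\|_{1}$. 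Normalizing $c_1,c_2\geq0$ by a rotation in each variable and writing $z_1=e^{i(s+t)}$, $z_2=e^{i(s-t)}$, we get $\varphi=e^{is}v(t)$ with $v(t)=c_1e^{it}+c_2e^{-it}$, and a direct computation gives $\|\varphi\|_{H^4}^4=\frac{1}{2\pi}\int_0^{2\pi}|v(t)|^4\,dt=c_1^4+4c_1^2c_2^2+c_2^4$. The quantity that controls everything is the one–variable $L^1$ norm $J(a,b):=\frac{1}{2\pi}\int_0^{2\pi}|a+be^{i\theta}|\,d\theta$, a complete elliptic integral of the second kind.

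For the upper bound I would feed the ansatz $\psi=e^{is}u(t)$ into the infimum above, where $u$ has only odd Fourier harmonics. The key observation is that $e^{is}e^{ikt}=z_1^{(1+k)/2}z_2^{(1-k)/2}$ is a genuine monomial exactly when $k$ is odd, and among these only $k=\pm1$ land in the nonnegative cone; thus $\psi$ has the prescribed analytic part precisely when $\widehat{u}(1)=c_1$ and $\widehat{u}(-1)=c_2$. This reduces the upper bound to the one–dimensional problem $\mu(c_1,c_2):=\inf\{\|u\|_\infty\}$ over such $u$, and a duality computation identifies its support function, giving $\mu(c_1,c_2)=\sup_{\lambda_1,\lambda_2\geq0}(\lambda_1c_1+\lambda_2c_2)/J(\lambda_1,\lambda_2)$. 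The upper bound in \eqref{eq:KYimp} thus becomes $\lambda_1c_1+\lambda_2c_2\leq\frac{\pi}{2\sqrt[4]{6}}\,(c_1^4+4c_1^2c_2^2+c_2^4)^{1/4}\,J(\lambda_1,\lambda_2)$ for all $\lambda_1,\lambda_2\geq0$; on the diagonal $c_1=c_2$ the extremal $u$ is the square wave $\tfrac{\pi}{2}\operatorname{sign}(\cos t)$, which makes this an equality and so yields sharpness claim (a).

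For the lower bound I would test against $g_0:=P_+(\varphi|\varphi|^2)$, the analytic projection of $\varphi|\varphi|^2$, which is the linear form $g_0=d_1z_1+d_2z_2$ with $d_1=c_1(c_1^2+2c_2^2)$ and $d_2=c_2(2c_1^2+c_2^2)$. Because $\varphi$ is analytic, $\int_{\mathbb{T}^2}\varphi\,\overline{g_0}\,dm_2=\int_{\mathbb{T}^2}|\varphi|^4\,dm_2=\|\varphi\|_{H^4}^4$, while $\|g_0\|_1=J(d_1,d_2)$; hence the lower bound $\|\varphi\|_{(H^1)^\ast}\geq\|\varphi\|_{H^4}$ is equivalent to $J(d_1,d_2)\leq(c_1^4+4c_1^2c_2^2+c_2^4)^{3/4}$, an inequality that degenerates to equality when $c_1c_2=0$. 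This same $g_0$ is sharp on the diagonal (there $g_0=3(z_1+z_2)$ and the bound equals $\pi/2$), reconfirming (a); for (b) I would expand both $\mu(1,c_2)$ and $\|\varphi\|_{H^p}$ as $c_2\to0^+$ and check that $\|\varphi\|_{(H^1)^\ast}=\|\varphi\|_{H^4}+o(c_2^2)$ whereas $\|\varphi\|_{H^p}$ strictly exceeds this for every $p>4$.

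The heart of the matter, and what I expect to be the main obstacle, is the pair of sharp inequalities for the elliptic integral $J$: the dual estimate $\lambda_1c_1+\lambda_2c_2\leq\frac{\pi}{2\sqrt[4]{6}}(c_1^4+4c_1^2c_2^2+c_2^4)^{1/4}J(\lambda_1,\lambda_2)$ for the upper bound, and $J(d_1,d_2)\leq(c_1^4+4c_1^2c_2^2+c_2^4)^{3/4}$ for the lower bound, each with its prescribed equality case. Both are homogeneous, so I would reduce them to a single real ratio $r=\min(a,b)/\max(a,b)\in[0,1]$ and establish the resulting one–variable elliptic-integral inequalities by a series expansion in $r$ together with a monotonicity or convexity argument, the tightness occurring at $r=1$ for the upper bound and at $r=0$ for the lower bound.
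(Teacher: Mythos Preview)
Your reduction is correct and, once fully unwound, lands on exactly the same one--variable inequality that the paper isolates. Your identity $\mu(c_1,c_2)=\sup_{\lambda\ge 0}(\lambda_1 c_1+\lambda_2 c_2)/J(\lambda_1,\lambda_2)$ together with the observation that the odd projection $u(t)\mapsto\tfrac12(u(t)-u(t+\pi))$ is contractive shows that $\|\varphi\|_{(H^1)^\ast}=\mu(c_1,c_2)$ exactly; maximising the bilinear form over $c$ then replaces your four--parameter upper--bound inequality by
\[
\|\lambda_1 z_1+\lambda_2 z_2\|_{(H_1^4)^\ast}\ \le\ \frac{\pi}{2\sqrt[4]{6}}\,J(\lambda_1,\lambda_2)=\frac{\pi}{2\sqrt[4]{6}}\,\|\lambda_1 z_1+\lambda_2 z_2\|_{H^1},
\]
which is precisely the paper's inequality after the duality swap of Lemma~\ref{lem:flipy}. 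Likewise your lower--bound test with $g_0=P(|\varphi|^2\varphi)$ gives, for $c_1=1$, $c_2=\sqrt{x}$, the inequality $\|\varphi_y\|_{H^1}\le (1+4x+x^2)^{3/4}/(1+2x)=\|\varphi_y\|_{(H^4)^\ast}$ with $y=\sqrt{x}(2+x)/(1+2x)$, again the paper's inequality. So the two routes differ only in packaging: you use the change of variables $z_1=e^{i(s+t)}$, $z_2=e^{i(s-t)}$ and the one--dimensional dual formula for $\mu$, whereas the paper uses the contractive projection $P_1$ and the finite--dimensional duality $I\colon H_1^4\to (H_1^1)^\ast$.

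The gap is that you stop at ``series expansion in $r$ together with a monotonicity or convexity argument'' for the elliptic--integral inequalities, and this is where essentially all of the work lies. A direct series or Ramanujan--type bound on $J$ is exactly what Kovalev--Yang used, and it only produces the non--sharp constant $5/(3+2\sqrt{6})$. The paper obtains the sharp $\pi/(2\sqrt[4]{6})$ by a genuinely new device: it interposes a specific degree--$8$ polynomial $F(y)=1+\tfrac{y^2}{4}+\tfrac{y^4}{64}+\tfrac{y^6}{256}+\tfrac{5}{768}y^8$, proves that $F(y)/\|\varphi_y\|_{H^1}$ is increasing on $[0,1]$ via a delicate tail analysis of the hypergeometric series (Lemma~\ref{lem:H1inc}), and proves that $\|\varphi_y\|_{(H^4)^\ast}/F(y)$ is increasing by reducing, through the explicit parametrisation of Lemma~\ref{lem:xy}, to the positivity of two explicit polynomials of degrees $46$ and $48$ checked by computer algebra (Lemma~\ref{lem:PQbounds}). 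Nothing in your outline suggests this auxiliary $F$, and without it a monotonicity proof does not go through; you should regard the sharp elliptic bound as the main theorem rather than a lemma to be dispatched by routine means. For part~(b), your asymptotic plan is viable, but note that expanding $\mu(1,c_2)$ to order $c_2^2$ still requires exhibiting a near--extremal $u$; the paper does this cleanly with the unimodular choice $u(t)=e^{-it}(1-\varepsilon e^{2it})^2/|1-\varepsilon e^{2it}|^2$, which you may wish to borrow.
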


Comparing Theorem~\ref{thm:KY} and \cite[Thm.~1.2]{KY20}, one finds that the novelty of our result is the sharp upper bound in \eqref{eq:KYimp} and the statements (a) and (b). For the sake of completeness (and since it does not require much additional effort), we will also include a proof of the lower bound in \eqref{eq:KYimp} in our exposition.

The sharp upper bound in \eqref{eq:KYimp} is obtained after replacing an estimate due to Ramanujan on the complete elliptic integral (see \cite[pp.~6--7]{KY20}) with certain explicit expressions obtained using the Hahn--Banach theorem. As in \cite{KY20}, some polynomial estimates are required as well. Part (b) of Theorem~\ref{thm:KY} is also a consequence of the Hahn--Banach theorem along with a counter-example to a related problem from \cite{MS11}.

\subsection*{Organization} In Section~\ref{sec:prelim}, some preliminary results pertaining to $H^p(\mathbb{T}^2)$ are compiled. Section~\ref{sec:KY} is devoted to the proof of Theorem~\ref{thm:KY}. Some related work is also discussed.

\section{Preliminaries} \label{sec:prelim} 
We require certain basic properties of $H^p(\mathbb{T}^2)$. Our aim is that our note be self-contained, so we refer to broadly to the monographs \cite{Duren70,Rudin69}. Suppose that $\varphi$ is an analytic polynomial. The bounded linear functional generated by $\varphi$ on $H^p(\mathbb{T}^2)$ is 
\begin{equation}\label{eq:innerproduct} 
	L_\varphi(f) := \langle f, \varphi \rangle. 
\end{equation}
In \eqref{eq:innerproduct} and in what follows, the inner product will always denote that of $L^2(\mathbb{T}^2)$. Suppose that $1 \leq p < \infty$. We view $\varphi$ in \eqref{eq:innerproduct} as an element in $(H^p(\mathbb{T}^2))^\ast$. Hence 
\begin{equation}\label{eq:varphi} 
	\|\varphi\|_{(H^p(\mathbb{T}^2))^\ast} := \sup_{f \in H^p(\mathbb{T}^2)} \frac{|\langle f, \varphi \rangle|}{\|f\|_{H^p(\mathbb{T}^2)}} = \frac{\langle g, \varphi \rangle}{\|g\|_{H^p(\mathbb{T}^2)}} 
\end{equation}
for some $g$ in $H^p(\mathbb{T}^2)$ with $\langle g, \varphi \rangle \geq0$.

By the Hahn--Banach theorem, $L_\varphi$ extends to a bounded linear functional on $L^p(\mathbb{T}^2)$ with the same norm. Every functional on $L^p(\mathbb{T}^2)$ is of the form
\[L_\psi (f) := \langle f, \psi \rangle\]
where $\psi$ is in $L^q(\mathbb{T}^2)$ for $1/p+1/q=1$. Since the bounded linear functional $L_\psi$ extends the bounded linear functional $L_\varphi$, we must have $\|\varphi\|_{(H^p(\mathbb{T}^2))^\ast}=\|\psi\|_{L^q(\mathbb{T}^2)}$ and $P \psi = \varphi$, where $P$ is the orthogonal projection (Riesz projection) from $L^2(\mathbb{T}^2)$ to $H^2(\mathbb{T}^2)$.

In particular, we get from \eqref{eq:varphi} that
\[\|\varphi\|_{(H^p(\mathbb{T}^2))^\ast} = \frac{\langle g, \varphi \rangle}{\|g\|_{H^p(\mathbb{T}^2)}} = \frac{\langle g, \psi \rangle}{\|g\|_{L^p(\mathbb{T}^2)}}= \|\psi\|_{L^q(\mathbb{T}^2)}.\]
From the rightmost equality and H\"older's inequality, we see that $|g|^{p-2} g = C\psi$ for some constant $C>0$. Taking the Riesz projection, we conclude that 
\begin{equation}\label{eq:Pg} 
	P\big(|g|^{p-2}g\big) = C\varphi 
\end{equation}
if and only if $\varphi$ and $g$ are related as in \eqref{eq:varphi}.

Let $H^p_1(\mathbb{T}^2)$ be the two-dimensional subspace of $H^p(\mathbb{T}^2)$ consisting of functions $f(z)=a z_1 + b z_2$ for $(a,b)$ in $\mathbb{C}^2$. The orthogonal projection $P_1 \colon H^2(\mathbb{T}^2) \to H_1^2(\mathbb{T}^2)$ extends to a contraction on $H^p(\mathbb{T}^2)$ for every $1 \leq p \leq \infty$. This claim can be easily deduced from the integral representation 
\begin{equation}\label{eq:P1} 
	P_1f(z) = \int_0^{2\pi} f(e^{i\theta}z_1,e^{i\theta}z_2)\,e^{-i\theta}\,\frac{d\theta}{2\pi}. 
\end{equation}
Suppose that $\varphi(z) = c_1 z_1 + c_2 z_2$ for $(c_1,c_2)\neq(0,0)$ and that $f$ is in $H^p(\mathbb{T}^2)$. Then
\[\frac{|\langle f, \varphi \rangle|}{\|f\|_{H^p(\mathbb{T}^2)}} = \frac{|\langle P_1 f, \varphi \rangle|}{\|f\|_{H^p(\mathbb{T}^2)}} \leq \frac{|\langle P_1 f, \varphi \rangle|}{\|P_1 f\|_{H^p(\mathbb{T}^2)}}.\]
Hence, it is clear that $\|\varphi\|_{(H^p(\mathbb{T}^2))^\ast}=\|\varphi\|_{(H_1^p(\mathbb{T}^2))^\ast}$. Moreover, the optimal $g$ in \eqref{eq:varphi}, and equivalently any solution of \eqref{eq:Pg}, is of the form $g(z)=a z_1 + b z_2$.

We will next establish three results needed in the proof of Theorem~\ref{thm:KY}. The first {lemma} shows that we may swap $1$ and $4$ in the ratio appearing in Theorem~\ref{thm:KY} when considering optimal lower and upper bounds. Here and elsewhere, we use the notation $\varphi_y(z):=z_1+yz_2$, with the presumption that $0\leq y \leq 1$.
\begin{lemma}\label{lem:flipy} 
	Suppose that $\varphi(z) = c_1 z_1 + c_2 z_2$ for $(c_1,c_2)\neq(0,0)$. The estimates
	\[\inf_{0\leq y \leq 1} \frac{\|\varphi_y\|_{(H^4(\mathbb{T}^2))^\ast}}{\|\varphi_y\|_{H^1(\mathbb{T}^2)}} \leq \frac{\|\varphi\|_{(H^1(\mathbb{T}^2))^\ast}}{\|\varphi\|_{H^4(\mathbb{T}^2)}} \leq \sup_{0\leq y \leq 1} \frac{\|\varphi_y\|_{(H^4(\mathbb{T}^2))^\ast}}{\|\varphi_y\|_{H^1(\mathbb{T}^2)}}\]
	are both attained. 
\end{lemma}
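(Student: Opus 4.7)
My plan is to establish both inequalities via two complementary Hahn--Banach dualities---pairing $\varphi$ against a linear optimizer for the $(H^1)^\ast$-norm to obtain the upper bound, and against a linear optimizer for the $H^4$-norm to obtain the lower bound---and then to verify the attainment claim by running each construction in reverse, starting from the extremizers of $R_y := \|\varphi_y\|_{(H^4)^\ast}/\|\varphi_y\|_{H^1}$ on the compact interval $[0,1]$. For brevity write $R(\varphi) := \|\varphi\|_{(H^1)^\ast}/\|\varphi\|_{H^4}$.

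For the upper bound, \eqref{eq:varphi} together with the remarks immediately following \eqref{eq:P1} supplies a linear $g \in H^1_1(\mathbb{T}^2)$ with $\|\varphi\|_{(H^1)^\ast}\|g\|_{H^1} = \langle g, \varphi\rangle$. The trivial estimate $|\langle g, \varphi\rangle| \leq \|g\|_{(H^4)^\ast}\|\varphi\|_{H^4}$ then yields $R(\varphi) \leq \|g\|_{(H^4)^\ast}/\|g\|_{H^1}$. Because every norm in sight is invariant under the torus action $z_j \mapsto \lambda_j z_j$ with $|\lambda_j|=1$ and under the swap $z_1 \leftrightarrow z_2$, and the ratio is scale-invariant, I can normalize $g$ to a positive multiple of $\varphi_{y^+}$ for some $y^+ \in [0,1]$, giving $R(\varphi) \leq R_{y^+} \leq \sup_{y\in[0,1]} R_y$. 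The lower bound is symmetric: because $H^4_1(\mathbb{T}^2)$ is two-dimensional, the identification $(H^4_1)^\ast = (H^4)^\ast|_{H^4_1}$ from Section~\ref{sec:prelim} furnishes a linear $f$ satisfying $\|\varphi\|_{H^4}\|f\|_{(H^4)^\ast} = \langle \varphi, f\rangle$, and the estimate $|\langle \varphi, f\rangle| \leq \|\varphi\|_{(H^1)^\ast}\|f\|_{H^1}$ after the same normalization gives $R(\varphi) \geq R_{y^-} \geq \inf_{y\in[0,1]} R_y$.

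Continuity of $y \mapsto R_y$ on $[0,1]$ (each of the two norms depending continuously on the coefficient vector) together with compactness immediately shows that the infimum and supremum are attained, as a minimum and a maximum. To see that the bounding inequalities are themselves sharp, let $y_0$ be a maximizer of $R_y$ and let $f_0$ be a linear attainer of $\|\varphi_{y_0}\|_{(H^4)^\ast}$; then $\langle f_0, \varphi_{y_0}\rangle = \|\varphi_{y_0}\|_{(H^4)^\ast}\|f_0\|_{H^4}$, combined with $|\langle f_0, \varphi_{y_0}\rangle| \leq \|f_0\|_{(H^1)^\ast}\|\varphi_{y_0}\|_{H^1}$, forces $R(f_0) \geq R_{y_0} = \sup_y R_y$, and the already-proved upper bound applied to $f_0$ then yields equality. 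At a minimizer of $R_y$, the mirror construction using a linear attainer of $\|\varphi_{y_0}\|_{H^1}$ in $(H^1_1(\mathbb{T}^2))^\ast$ (available because $H^1_1$ is two-dimensional) forces equality in the lower bound.

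The main obstacle I anticipate is the bookkeeping needed to ensure that each of the four attainers can be chosen \emph{linear}: for the $(H^1)^\ast$-optimizer this is precisely the content of the $P_1$-contractivity discussion in Section~\ref{sec:prelim} (where the non-reflexivity of $H^1$ would otherwise be a genuine obstruction), while for the $H^4$ and $H^1$ attainers it follows from the finite-dimensionality of $H^p_1(\mathbb{T}^2)$ and the matching of dual norms recorded there. Once these reductions are in place, the rest of the argument is merely Hölder's inequality and the scaling symmetries of $\mathbb{T}^2$.
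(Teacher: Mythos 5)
Your argument is correct and takes essentially the same route as the paper: both reduce to the two-dimensional subspace $H^p_1(\mathbb{T}^2)$ via the $P_1$-contraction, exploit its reflexivity/finite-dimensionality to swap the roles of the exponents $1$ and $4$, and then normalize the resulting linear extremizer to $\varphi_y$ by rotation, coordinate swap and scaling. The only difference is presentational: the paper invokes the equality of the norms of $I\colon H^4_1(\mathbb{T}^2)\to (H^1_1(\mathbb{T}^2))^\ast$ and its adjoint in one line, whereas you unpack that identity into explicit norming functionals and H\"older pairings (and, usefully, you make the attainment step fully explicit, which the paper leaves largely implicit).
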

\begin{proof}
	Recall that if $\varphi(z) = c_1 z_1 + c_2 z_2$, then
	\[\frac{\|\varphi\|_{(H^1(\mathbb{T}^2))^\ast}}{\|\varphi\|_{H^4(\mathbb{T}^2)}}=\frac{\|\varphi\|_{(H_1^1(\mathbb{T}^2))^\ast}}{\|\varphi\|_{H_1^4(\mathbb{T}^2)}}.\]
	The point of this reformulation is that the two-dimensional space $H^1_1(\mathbb{T}^2)$ is reflexive (while $H^1(\mathbb{T}^2)$ is not). Considering the identity operator $I\colon H^4_1(\mathbb{T}^2) \to (H^1_1(\mathbb{T}^2))^\ast$ and using duality, we find that
	\[\sup_{(c_1,c_2)\neq(0,0)} \frac{\|\varphi\|_{(H_1^1(\mathbb{T}^2))^\ast}}{\|\varphi\|_{H_1^4(\mathbb{T}^2)}} = \sup_{(c_1,c_2)\neq(0,0)} \frac{\|\varphi\|_{(H_1^4(\mathbb{T}^2))^\ast}}{\|\varphi\|_{H_1^1(\mathbb{T}^2)}}.\]
	To see that the same statement holds with $\sup$ replaced by $\inf$, consider instead $I\colon(H^1_1(\mathbb{T}^2))^\ast \to H^4_1(\mathbb{T}^2)$. Hence, we may equivalently investigate sharp upper and lower bounds for the ratio
	\[\frac{\|\varphi\|_{(H_1^4(\mathbb{T}^2))^\ast}}{\|\varphi\|_{H_1^1(\mathbb{T}^2)}} = \frac{\|\varphi\|_{(H^4(\mathbb{T}^2))^\ast}}{\|\varphi\|_{H^1(\mathbb{T}^2)}}.\]
	Set $\varphi(z)=c_1 z_1 + c_2 z_2$ for some $(c_1,c_2)\neq(0,0)$. By the rotational invariance of the Haar measure $m_2$, we may assume that $c_1,c_2\geq0$. By symmetry, we may also assume that $c_1 \geq c_2$ so $c_1>0$. Dividing $\varphi$ by a non-zero constant does not change the ratio, so with $y = c_2/c_1$, which satisfies $0 \leq y \leq 1$, we obtain
	\[\frac{\|\varphi\|_{(H^4(\mathbb{T}^2))^\ast}}{\|\varphi\|_{H^1(\mathbb{T}^2)}} = \frac{\|\varphi_y\|_{(H^4(\mathbb{T}^2))^\ast}}{\|\varphi_y\|_{H^1(\mathbb{T}^2)}}.\]
	Sharp upper and lower bounds are therefore obtained by taking the supremum and infimum, respectively, over $0 \leq y \leq 1$. 
\end{proof}

The second result readily demonstrates the virtue of the first {lemma}, since equation \eqref{eq:Pg} is easy to solve explicitly for $\varphi_y$ when $p=4$.
\begin{lemma}\label{lem:xy} 
	Given $0 \leq y \leq 1$, let $0\leq x\leq 1$ be the unique real number such that
	\[y = \sqrt{x}\frac{2+x}{1+2x},\]
	and set $g(z)=z_1+\sqrt{x}z_2$. Then $P\big(|g|^2 g\big) = (1+2x)\varphi_y$. 
\end{lemma}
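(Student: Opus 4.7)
The plan is to compute $|g|^2 g$ explicitly as a finite trigonometric polynomial on $\mathbb{T}^2$, apply the Riesz projection termwise (which simply discards monomials containing a negative power of $z_1$ or $z_2$), and match coefficients with $(1+2x)\varphi_y$. Before doing that, I would briefly justify the word ``unique'' in the statement by checking that $y(x) := \sqrt{x}(2+x)/(1+2x)$ is a bijection of $[0,1]$ onto itself: one has $y(0)=0$ and $y(1)=1$, and differentiating $y(x)^2=x(2+x)^2/(1+2x)^2$ leads (after collecting terms over a common denominator) to a quadratic numerator with negative discriminant, so $y$ is strictly increasing.

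For the main computation, I would use that on $\mathbb{T}^2$ one has $\overline{z_j}=z_j^{-1}$, and expand
\[
|g|^2 = (z_1+\sqrt{x}\,z_2)(\overline{z_1}+\sqrt{x}\,\overline{z_2}) = 1+x+\sqrt{x}\bigl(z_1\overline{z_2}+z_2\overline{z_1}\bigr).
\]
Multiplying by $g = z_1+\sqrt{x}\,z_2$ produces six terms. Four of them are analytic monomials: $(1+x)z_1$, $(1+x)\sqrt{x}\,z_2$, $x z_1$ (from $\sqrt{x}\,z_1\overline{z_2}\cdot\sqrt{x}\,z_2$), and $\sqrt{x}\,z_2$ (from $\sqrt{x}\,z_2\overline{z_1}\cdot z_1$). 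The remaining two, $\sqrt{x}\,z_1^2\overline{z_2}$ and $x z_2^2\overline{z_1}$, contain a negative exponent and are killed by $P$. Summing the four analytic contributions gives
\[
P(|g|^2 g) = (1+2x)\,z_1 + \sqrt{x}(2+x)\,z_2,
\]
which, by the choice of $y$, equals $(1+2x)(z_1+yz_2) = (1+2x)\varphi_y$.

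There is really no obstacle here — everything reduces to a bookkeeping exercise on six monomials — so the proof is just the computation above together with the monotonicity remark that secures uniqueness of $x$.
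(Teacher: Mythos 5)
Your proposal is correct and follows essentially the same route as the paper: expand $|g|^2=1+x+\sqrt{x}(z_1\overline{z_2}+\overline{z_1}z_2)$, multiply by $g$, discard the non-analytic monomials under $P$, and verify that $x\mapsto\sqrt{x}(2+x)/(1+2x)$ is increasing on $[0,1]$ to justify uniqueness. The only difference is that you spell out the monotonicity computation (the numerator $2x^2-x+2$ with negative discriminant), which the paper leaves as a one-line check.
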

\begin{proof}
	Since $|g(z)|^2 = 1+x + \sqrt{x}(z_1 \overline{z_2}+\overline{z_1}z_2)$, we find that
	\[P(|g|^2 g)(z) = (1+x)g(z)+x z_1 + \sqrt{x}z_2 = (1+2x) \left(z_1 + \sqrt{x} \frac{2+x}{1+2x}z_2\right).\]
	The proof is completed by checking that $y(x) = \sqrt{x}(2+x)/(1+2x)$ is an increasing function on $0 \leq x \leq 1$.
\end{proof}

We require the third {lemma} only for $p=1$, but we state and prove it in the general case since it requires no additional effort. Note that $\binom{1}{1/2}=4/\pi$, which explains the appearance of $\pi$ in Theorem~\ref{thm:KY}.
\begin{lemma}\label{lem:strange} 
	For $1 \leq p <\infty$, we have the identities 
	\begin{equation}\label{eq:strange} 
		\binom{p}{p/2} = \|z_1+z_2\|_{H^p(\mathbb{T}^2)}^p = \sum_{j=0}^\infty \binom{p/2}{j}^2 = \frac{4}{p} \sum_{j=0}^\infty \binom{p/2}{j}^2 j. 
	\end{equation}
\end{lemma}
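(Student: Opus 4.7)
The plan is to establish the identities in sequence, reducing each to a computation on $\mathbb{T}$. First, by rotation invariance of $m_2$, the substitution $w = z_1 \overline{z_2}$ reduces $\|z_1+z_2\|_{H^p(\mathbb{T}^2)}^p$ to $\int_\mathbb{T}|1+w|^p\,dm(w)$. For $0 \leq r < 1$ the identity
\[\int_\mathbb{T}|1+rw|^p\,dm(w) = \int_\mathbb{T}(1+rw)^{p/2}(1+r\overline{w})^{p/2}\,dm(w) = \sum_{j=0}^\infty \binom{p/2}{j}^2 r^{2j}\]
follows by expanding both factors via the (absolutely convergent) binomial series and using orthogonality of monomials on $\mathbb{T}$ to kill all off-diagonal terms. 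Passing $r \to 1^-$ then yields the middle equality of \eqref{eq:strange}: dominated convergence (uniform bound $2^p$) on the left, and monotone convergence (nonnegative terms) on the right.

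For the leftmost equality, I would use $|1+e^{i\theta}|^p = 2^p|\cos(\theta/2)|^p$ to reduce $\int_\mathbb{T}|1+w|^p\,dm$ to a standard Beta integral, yielding the closed form $\tfrac{2^p}{\sqrt{\pi}}\,\Gamma(\tfrac{p+1}{2})/\Gamma(\tfrac{p}{2}+1)$; Legendre's duplication formula then identifies this with $\binom{p}{p/2} = \Gamma(p+1)/\Gamma(\tfrac{p}{2}+1)^2$.

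For the rightmost equality, set $F(r) = \int_\mathbb{T}|1+rw|^p\,dm(w) = \sum_{j=0}^\infty \binom{p/2}{j}^2 r^{2j}$. Differentiating the series term by term and letting $r \to 1^-$ gives $\lim_{r \to 1^-} F'(r) = 2\sum_{j=0}^\infty j\binom{p/2}{j}^2$ by monotone convergence, the limit being finite because $j\binom{p/2}{j}^2 = O(j^{-(p+1)})$ as $j \to \infty$. On the integral side, the elementary identity $r(r+\cos\theta) = \tfrac{1}{2}(1+r^2+2r\cos\theta)+\tfrac{1}{2}(r^2-1)$ yields
\[rF'(r) = \frac{p}{2}F(r) + \frac{p(r^2-1)}{2}\int_\mathbb{T}|1+rw|^{p-2}\,dm(w).\]
Equating the two expressions for $\lim F'(r)$ and invoking the leftmost equality produces the rightmost.

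The main technical point will be showing that the trailing integral $\int_\mathbb{T}|1+rw|^{p-2}\,dm(w)$ is controlled when $1 \leq p < 2$, where $|1+rw|^{p-2}$ becomes singular near $w = -1$ as $r \to 1^-$. A local analysis near $\theta = \pi$, using $|1+re^{i\theta}|^2 \asymp (1-r)^2 + (\theta-\pi)^2$, bounds this integral by a constant multiple of $(1-r)^{p-1}$ for $1 < p < 2$ (respectively $\log\tfrac{1}{1-r}$ at $p = 1$), which is absorbed by the prefactor $r^2-1$ in the limit $r \to 1^-$.
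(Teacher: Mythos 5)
Your proof is correct in substance and follows the same basic strategy as the paper: write $\|z_1+rz_2\|_{H^p(\mathbb{T}^2)}^p$ both as the integral $\int_{\mathbb{T}}|1+rw|^p\,dm(w)$ and as the series $\sum_{j}\binom{p/2}{j}^2r^{2j}$, evaluate at $r=1$ via the Beta integral to get the first two identities, and differentiate in the parameter to get the third. The one genuine difference is how the endpoint $r=1$ is handled in the derivative computation. The paper simply sets $x=1$ in the differentiated integral and exploits the exact algebraic cancellation $\left(1+2\cos\theta+1\right)^{p/2-1}\left(2\cos\theta+2\right)=\left(2+2\cos\theta\right)^{p/2}$, so the potentially singular power $p/2-1$ never has to be confronted: the differentiated integral at $x=1$ is exactly $\frac{p}{2}\binom{p}{p/2}$ with no error term. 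You instead stay at $r<1$, use the identity $r(r+\cos\theta)=\frac{1}{2}(1+r^2+2r\cos\theta)+\frac{1}{2}(r^2-1)$ to isolate the singular piece behind the vanishing prefactor $r^2-1$, and then estimate $\int_{\mathbb{T}}|1+rw|^{p-2}\,dm(w)$. That works, and it has the virtue of making the passage to the boundary completely explicit (the paper is rather terse about why one may differentiate term by term and then set $x=1$), at the cost of the local singularity analysis that the paper's cancellation renders unnecessary. One small correction: for $1<p<2$ your claimed bound $\int_{\mathbb{T}}|1+rw|^{p-2}\,dm(w)\le C(1-r)^{p-1}$ cannot be right as stated, since the integral is bounded below by a positive constant (the arc where $|1+rw|\asymp 1$ already contributes a constant); only the arc $|\theta-\pi|\le 1-r$ contributes $\asymp(1-r)^{p-1}$, while the complement contributes $O(1)$ because $t^{p-2}$ is integrable at $0$ when $p>1$. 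The correct bound $O(1)$ for $1<p\le 2$ (and $O(\log\frac{1}{1-r})$ at $p=1$) is of course still annihilated by the factor $r^2-1$, so your conclusion stands.
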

\begin{proof}
	The proof relies on expressing $\|z_1+xz_2\|_{H^p(\mathbb{T}^2)}^p$, for $0 \leq x \leq 1$ in two different ways. First, we note that 
	\begin{equation}\label{eq:iexp} 
		\|z_1+xz_2\|_{H^p(\mathbb{T}^2)}^p = \|(1+x\overline{z_1}z_2)^2\|_{L^{p/2}(\mathbb{T}^2)}^{2/p} = \int_0^{2\pi} \left(1+2x\cos(\theta)+x^2\right)^{p/2}\,\frac{d\theta}{2\pi}. 
	\end{equation}
	Setting $x=1$, we obtain the first equality in \eqref{eq:strange} from a well-known integral formula for the {beta} function (see e.g. \cite[Sec.~9.3]{DurenInv}),
	\[\int_0^{2\pi} \left(2+2\cos(\theta)\right)^{p/2}\,\frac{d\theta}{2\pi} = \frac{2^{p+1}}{\pi} \int_0^{\pi/2} (\cos{\theta})^p\,d\theta = \frac{2^p}{\pi} \operatorname{B}\left(\frac{p+1}{2},\frac{1}{2}\right) = \binom{p}{p/2}.\]
	Second, we expand
	\[(1+x\overline{z_1}z_2)^{p/2} = \sum_{j=0}^\infty \binom{p/2}{j} \left(\overline{z_1}z_2\right)^j x^j.\]
	Consequently, Parseval's identity shows that 
	\begin{equation}\label{eq:bexp} 
		\|z_1+xz_2\|_{H^p(\mathbb{T}^2)}^p = \|(1+x\overline{z_1}z_2)^{p/2}\|_{L^2(\mathbb{T}^2)}^2 = \sum_{j=0}^\infty \binom{p/2}{j}^2 x^{2j}. 
	\end{equation}
	Setting $x=1$ in \eqref{eq:bexp}, we obtain the second equality in \eqref{eq:strange}. For the third equality in \eqref{eq:strange}, we differentiate the expressions \eqref{eq:iexp} and \eqref{eq:bexp} with respect to $x$ to obtain
	\[\sum_{j=0}^\infty \binom{p/2}{j}^2 2j x^{2j-1} = \frac{p}{2} \int_0^{2\pi} \left(1+2x\cos(\theta)+x^2\right)^{p/2-1}\left(2\cos(\theta)+2x\right)\,\frac{d\theta}{2\pi}.\]
	Setting $x=1$ and using \eqref{eq:iexp} yet again, we obtain the third equality in \eqref{eq:strange}. 
\end{proof}

We close the present section by explaining the connection between Theorem~\ref{thm:suffharm} and Theorem~\ref{thm:KY}. 
\begin{proof}
	[Proof of Theorem~\ref{thm:suffharm}] Suppose that $\varphi(z)=\alpha z_1 + \beta z_2$. By using the upper bound of Theorem~\ref{thm:KY}, we see that
	\[\frac{\|\varphi\|_{H^4(\mathbb{T}^2)}}{\sqrt[4]{6}} \leq \frac{2}{\pi} \qquad \implies \qquad \|\varphi\|_{(H^1(\mathbb{T}^2))^\ast} \leq 1.\]
	The proof is completed by computing $\|\varphi\|_{H^4(\mathbb{T}^2)}=\left(|\alpha|^4+4|\alpha\beta|^2+|\beta|^4\right)^{1/4}$ and appealing to Theorem~\ref{thm:KYharm}. 
\end{proof}

\section{Proof of Theorem~\ref{thm:KY}} \label{sec:KY} 
We will start from Lemma~\ref{lem:flipy}. Let $F$ be a polynomial that is strictly positive for $0 \leq y \leq 1$, and write 
\begin{equation}\label{eq:Fsneak} 
	\frac{\|\varphi_y\|_{(H^4(\mathbb{T}^2))^\ast}}{\|\varphi_y\|_{H^1(\mathbb{T}^2)}} = \frac{\|\varphi_y\|_{(H^4(\mathbb{T}^2))^\ast}}{F(y)}\frac{F(y)}{\|\varphi_y\|_{H^1(\mathbb{T}^2)}}. 
\end{equation}
Our idea is to choose $F$ in such a way that we can treat the two fractions on the right hand side of \eqref{eq:Fsneak} independently. We begin with the most technical part of the proof, which pertains to the second fraction.
\begin{lemma}\label{lem:H1inc} 
	The function 
	\begin{equation}\label{eq:H1inc} 
		\frac{1+\frac{y^2}{4}+\frac{y^4}{64}+\frac{y^6}{256}+cy^8}{\|\varphi_y\|_{H^1(\mathbb{T}^2)}} 
	\end{equation}
	is increasing on $0 \leq y \leq 1$ if and only if $c\geq \frac{5}{768}$. 
\end{lemma}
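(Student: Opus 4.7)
The plan is to analyze the sign of the derivative
\[\frac{d}{dy}\left(\frac{N(y)}{D(y)}\right) = \frac{N'(y)D(y) - N(y)D'(y)}{D(y)^2},\]
where $N(y) := 1 + y^2/4 + y^4/64 + y^6/256 + cy^8$ and $D(y) := \|\varphi_y\|_{H^1(\mathbb{T}^2)}$. For necessity, I would evaluate at $y = 1$ using Lemma~\ref{lem:strange} with $p=1$ (which gives $D(1) = 4/\pi$ and, via the third equality in \eqref{eq:strange}, $D'(1) = 2/\pi$); a direct calculation yields $(N'D - ND')(1) = (30c - 25/128)/\pi$, which is negative precisely when $c < 5/768$, so the function cannot be non-decreasing in a left neighborhood of $y = 1$.

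For sufficiency, first write $h_c(y) - h_{5/768}(y) = (c - 5/768)\,y^8/D(y)$, and note that $y^8/D(y)$ is itself increasing on $[0, 1]$: using the integral representation $D(y) = \int_0^{2\pi}\sqrt{\rho}\,d\theta/(2\pi)$ with $\rho = 1 + 2y\cos\theta + y^2$, the derivative $(y^8/D)'$ has the same sign as $\int_0^{2\pi}(8 + 15y\cos\theta + 7y^2)/\sqrt{\rho}\,d\theta/(2\pi)$, whose numerator is bounded below by $(1-y)(8-7y) \geq 0$ on $[0, 1]$. It therefore suffices to handle the critical case $c = 5/768$.

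The crucial algebraic observation at $c = 5/768$ is the factorization $2yN'(y) - N(y) = (y^2-1)Q(y)/256$ with $Q(y) := 256 + 64y^2 + 36y^4 + 25y^6$; this is equivalent to the telescoping identity $\sum_{j=0}^{k}(4j-1)\binom{1/2}{j}^2 = -\binom{-1/2}{k}^2$, which I would prove by induction using the recurrence $4(k+1)^2\binom{1/2}{k+1}^2 = \binom{-1/2}{k}^2$. Combining this with the integral identity $\int_0^{2\pi}\cos\theta/\sqrt{\rho}\,d\theta/(2\pi) = D'(y) - y I_0(y)$, where $I_0(y) := \int_0^{2\pi}\rho^{-1/2}\,d\theta/(2\pi)$, yields
\[N'(y)D(y) - N(y)D'(y) = (1 - y^2)\bigl[N'(y)I_0(y) - (Q(y)/256)D'(y)\bigr].\]
After substituting $t = y^2$ and setting $M(t) := N'(y)/y$, $E(t) := D'(y)/y$, $F(t) := I_0(y)$, $R(t) := Q(y)/256$, the task reduces to proving $M(t)F(t) \geq R(t)E(t)$ for $t \in [0, 1]$. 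The striking structural fact here is that $R(t) = 1 + t/4 + 9t^2/64 + 25t^3/256$ is exactly the degree-$3$ Taylor polynomial of $F$ (its coefficients are $\binom{-1/2}{k}^2$ for $k = 0, \ldots, 3$), so $F \geq R \geq 0$ on $[0, 1]$ is trivial. For the companion inequality $M \geq E$, I would factor $M(t) - E(t) = t^3\bigl[245/6144 - \sum_{m\geq 4}2(m+1)\binom{1/2}{m+1}^2 t^{m-3}\bigr]$; the bracket is decreasing in $t$, and at $t = 1$ equals $M(1) - E(1) = 245/384 - 2/\pi$, which is positive since $\pi > 768/245$. Multiplying the two inequalities $M \geq E \geq 0$ and $F \geq R \geq 0$ gives $MF \geq EF \geq ER$, as needed.

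The main obstacle will be discovering the factorization $2yN' - N = (y^2-1)Q/256$ and recognizing that the quotient $Q/256$ coincides exactly with the degree-$3$ Taylor polynomial of the auxiliary series $I_0$; this hidden structure is particular to the numerator $N$ matching the first four Taylor coefficients of $D$ together with the sharp choice $c = 5/768$. Without it, the direct power series expansion of $N'D - ND'$ produces a mixture of positive and negative terms that is considerably harder to control.
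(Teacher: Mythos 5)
Your proof is correct, but it takes a genuinely different route from the paper's. The paper substitutes $\xi=y^2$, divides numerator and denominator by $\xi^4$, and studies $R(\xi)=(\Sigma_1(\xi)+c)/(\Sigma_1(\xi)+\Sigma_2(\xi))$ with $\Sigma_1(\xi)=\sum_{j=0}^{3}\binom{1/2}{j}^2\xi^{j-4}$ and $\Sigma_2(\xi)=\sum_{j\geq4}\binom{1/2}{j}^2\xi^{j-4}$; the condition $R'\geq0$ is then exactly $c\geq(\Sigma_1'\Sigma_2-\Sigma_1\Sigma_2')/(\Sigma_1'+\Sigma_2')$, and both implications follow at once by showing this right-hand side is increasing on $(0,1]$ with value $5/768$ at $\xi=1$. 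You get necessity from the sign of $(N'D-ND')(1)$ — the same endpoint evaluation in disguise, and your value $(30c-25/128)/\pi$ is correct — and for sufficiency you reduce to $c=5/768$ via the monotonicity of $y^8/D$ and then exploit the identity $D-2yD'=(1-y^2)I_0$ (the classical elliptic-integral relation hiding in $\|\varphi_y\|_{H^1(\mathbb{T}^2)}$) together with the factorization $2yN'-N=(y^2-1)Q/256$, splitting the problem into the two clean comparisons $M\geq E$ and $F\geq R$. I checked the key computations: the factorization, the identity $D-2yD'=(1-y^2)I_0$, the fact that $R$ has coefficients $\binom{-1/2}{k}^2$ and is the degree-$3$ Taylor polynomial of $F$, and $M(1)-E(1)=245/384-2/\pi>0$ all hold. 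The paper's argument is shorter and handles both directions through a single monotonicity statement about an explicit rational expression; yours requires more discovery (the factorization and the elliptic-integral identity) but makes transparent \emph{why} the critical constant is $\binom{-1/2}{3}^2/15=5/768$ and exposes the Taylor-coefficient structure that the paper leaves implicit. One cosmetic caveat: $I_0(1)=+\infty$, so your identity and the inequality $F\geq R$ should be read on $[0,1)$, with the endpoint $y=1$ handled by continuity of $N'D-ND'$ (or by your explicit computation that it vanishes there).
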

\begin{proof}
	Set $\xi = y^2$. Recall from the proof of Lemma~\ref{lem:strange} that
	\[\|\varphi_y\|_{H^1(\mathbb{T}^2)} = \sum_{j=0}^\infty \binom{1/2}{j}^2 \xi^j = 1+\frac{\xi}{4}+\frac{\xi^2}{64}+\frac{\xi^3}{256} + \sum_{j=4}^\infty \binom{1/2}{j}^2 \xi^j.\]
	We will divide by $\xi^4$ upstairs and downstairs in \eqref{eq:H1inc} and equivalently investigate
	\[R(\xi):=\frac{\Sigma_1(\xi)+c}{\Sigma_1(\xi)+\Sigma_2(\xi)}\]
	where
	\[\Sigma_1(\xi):=\sum_{j=0}^3 \binom{1/2}{j}^2 \xi^{j-4} \qquad\text{and} \qquad \Sigma_2(\xi):=\sum_{j=4}^\infty \binom{1/2}{j}^2 \xi^{j-4}.\]
	We begin by computing 
	\begin{equation}\label{eq:comp1} 
		\Sigma_1(1) = \frac{325}{256}, \qquad \Sigma_1'(1) = -\frac{1225}{256}, \qquad \Sigma_2(1) = \frac{4}{\pi}-\frac{325}{256}. 
	\end{equation}
	The two first are direct computations, while the last uses Lemma~\ref{lem:strange} to obtain that
	\[\Sigma_1(1)+\Sigma_2(1)=\sum_{j=0}^\infty \binom{1/2}{j}^2= \binom{1}{1/2} = \frac{4}{\pi}.\]
	It is clear that $\Sigma_1$ is positive and decreasing on $0 \leq y \leq 1$ and that $\Sigma_2$ is positive and increasing on $0 \leq y \leq 1$. Differentiating term by term, we find that each summand is maximized when $y=1$. Hence
	\[\big(\Sigma_1(\xi)+\Sigma_2(\xi)\big)' \leq \Sigma_1'(1)+\Sigma_2'(1) = \sum_{j=0}^\infty \binom{1/2}{j}^2 (j-4) = -\frac{15}{\pi}.\]
	In the final equality, we used Lemma~\ref{lem:strange} twice. Hence we find that $\Sigma_1'+\Sigma_2'$ is negative for $0<\xi \leq 1$ and that 
	\begin{equation}\label{eq:comp2} 
		\Sigma_2'(1) = \frac{1225}{256}-\frac{15}{\pi}. 
	\end{equation}
	We want to find a requirement on $c$ such that $R'(\xi)\geq0$ for $0 \leq \xi \leq 1$. Note that 
	\begin{equation}\label{eq:Rdiff} 
		0 \leq R' = \frac{\Sigma_1'(\Sigma_1+\Sigma_2)-(\Sigma_1+c)(\Sigma_1'+\Sigma_2')}{(\Sigma_1+\Sigma_2)^2} \quad \Longleftrightarrow \quad c \geq \frac{\Sigma_1'\Sigma_2 - \Sigma_1\Sigma_2'}{\Sigma_1'+\Sigma_2'}, 
	\end{equation}
	where we used that $\Sigma_1'(\xi)+\Sigma_2'(\xi)<0$ for $0<\xi\leq 1$. If we could prove that the right-hand side of \eqref{eq:Rdiff} is increasing on the interval $0<\xi\leq1$, then we would get the stated requirement on $c$ by \eqref{eq:comp1} and \eqref{eq:comp2}, since
	\[\frac{\Sigma_1'(1)\Sigma_2(1) - \Sigma_1(1)\Sigma_2'(1)}{\Sigma_1'(1)+\Sigma_2'(1)} = \frac{-\frac{1225}{256}\left(\frac{4}{\pi}-\frac{325}{256}\right)-\frac{325}{256}\left(\frac{1225}{256}-\frac{15}{\pi}\right)}{-\frac{15}{\pi}}=\frac{5}{768}.\]
	To prove that the right-hand side of \eqref{eq:Rdiff} is increasing on $0<\xi\leq 1$, we begin by rewriting it as 
	\begin{equation}\label{eq:rewrite} 
		\frac{\Sigma_1'\Sigma_2 - \Sigma_1\Sigma_2'}{\Sigma_1'+\Sigma_2'} = \frac{\Sigma_2 - \Sigma_2' \frac{\Sigma_1}{\Sigma_1'}}{1 + \frac{\Sigma_2'}{\Sigma_1'}}. 
	\end{equation}
	Note that 
	\begin{align*}
		0 \leq -\frac{d}{d\xi}\frac{\Sigma_1(\xi)}{\Sigma_1'(\xi)} \quad \Longleftrightarrow \quad 0 &\leq \Sigma_1''(\xi) \Sigma_1(\xi) -(\Sigma_1'(\xi))^2 . 
	\end{align*}
	The second statement can be checked directly because
	\[\Sigma_1''(\xi) \Sigma_1(\xi) -(\Sigma_1'(\xi))^2= \frac{4}{\xi^{10}} + \frac{2}{\xi^9} + \frac{11}{32 \xi^8} + \frac{5}{64 \xi^7} + \frac{17}{2048 \xi^6} + \frac{1}{4096 \xi^5} + \frac{1}{65536 \xi^4}.\]
	Hence $-\Sigma_1/\Sigma_1'$ is positive and increasing. Since both $\Sigma_2$ and $\Sigma_2'$ are positive and increasing, we find that the numerator on the right-hand side of \eqref{eq:rewrite} is increasing. Since $\Sigma_1$ is positive and decreasing and $-\Sigma_2' \Sigma_1/\Sigma_1'$ is positive and increasing, we conclude that $-\Sigma_2'/\Sigma_1'$ is positive and increasing. Consequently, the denominator on the right-hand side of \eqref{eq:rewrite} is decreasing. 
\end{proof}
We will use the polynomial
\[F(y) := 1+\frac{y^2}{4}+\frac{y^4}{64}+\frac{y^6}{256}+\frac{5}{768}y^8\]
in \eqref{eq:Fsneak}. 
\begin{remark}
	We can establish that $\|\varphi_y\|_{H^1(\mathbb{T}^2)} \leq F(y)$ for $0 \leq y \leq 1$, similarly to how the inequality
	\[\|\varphi_y\|_{H^1(\mathbb{T}^2)} \leq 1 + \frac{y^2}{4}+\frac{y^4}{64}+\frac{y^6}{128} =: \widetilde{F}(y)\]
	is proved in \cite[p.~7]{KY20}. The latter estimate is sharper for $3/5\leq y^2 \leq 1$, but the statement of Lemma~\ref{lem:H1inc} does not hold if $F$ is replaced by $\widetilde{F}$. 
\end{remark}

By Lemma~\ref{lem:H1inc}, we know that 
\begin{equation}\label{eq:Fest1} 
	\frac{F(0)}{\|\varphi_0\|_{H^2(\mathbb{T}^2)}}\leq \frac{F(y)}{\|\varphi_y\|_{H^1(\mathbb{T}^2)}} \leq \frac{F(1)}{\|\varphi_1\|_{H^2(\mathbb{T}^2)}}, 
\end{equation}
so it remains to verify that the analogous estimates hold for 
\begin{equation}\label{eq:secondfrac} 
	\frac{\|\varphi_y\|_{(H^4(\mathbb{T}^2))^\ast}}{F(y)}. 
\end{equation}
Recalling that \eqref{eq:varphi} and \eqref{eq:Pg} are equivalent, we invoke Lemma~\ref{lem:xy} to see that 
\begin{equation}\label{eq:phiyx} 
	\|\varphi_y\|_{(H^4(\mathbb{T}^2))^\ast} = \frac{\langle g, \varphi_y\rangle}{\|g\|_{H^4(\mathbb{T}^2)}} = \frac{1+\sqrt{x}y}{\left(1+4x+x^2\right)^{1/4}}=\frac{\left(1+4x+x^2\right)^{3/4}}{1+2x}, 
\end{equation}
where $y = \sqrt{x}(2+x)/(1+2x)$. By \eqref{eq:secondfrac} and \eqref{eq:phiyx}, it is equivalent to consider 
\begin{equation}\label{eq:PQ} 
	\left(\frac{(1+2x)^7\|\varphi_y\|_{(H^4(\mathbb{T}^2))^\ast}}{(1+2x)^7 F(y)}\right)^4 = \frac{(1+2x)^{28} (1+4x+x^2)^3}{(1+2x)^{32}(F(y))^4} =: \frac{P(x)}{Q(x)}. 
\end{equation}
\begin{lemma}\label{lem:PQbounds} 
	Let $P$ and $Q$ be as in \eqref{eq:PQ}. Then
	\[\frac{P(0)}{Q(0)} \leq \frac{P(x)}{Q(x)} \leq \frac{P(1)}{Q(1)}.\]
\end{lemma}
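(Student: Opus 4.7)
The plan is to reduce Lemma~\ref{lem:PQbounds} to explicit polynomial inequalities and verify them. First, by Lemma~\ref{lem:xy} and the identity $y^{2k}=x^k(2+x)^{2k}/(1+2x)^{2k}$, the quantity $(1+2x)^8 F(y(x))$ clears every denominator in $F$ and is an honest polynomial in $x$ of degree $12$. Hence $Q$ is a polynomial of degree $48$, while $P(x)=(1+2x)^{28}(1+4x+x^2)^3$ has degree $34$, and both are strictly positive on $[0,1]$.

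Next, the claimed two-sided bound is equivalent, after multiplying out the positive denominators, to the two polynomial inequalities
\[
    A(x):=P(1)Q(x)-P(x)Q(1)\geq 0 \quad\text{and}\quad B(x):=P(x)Q(0)-P(0)Q(x)\geq 0
\]
on $[0,1]$. Since $A(1)=0$ and $B(0)=0$, I would factor $A(x)=(1-x)\widetilde{A}(x)$ and $B(x)=x\widetilde{B}(x)$ and reduce to showing $\widetilde{A},\widetilde{B}\geq 0$ on $[0,1]$. A convenient sufficient certificate is that all coefficients of each quotient, when rewritten in the Bernstein basis $\{x^j(1-x)^{N-j}\}_j$ (for $N$ at least the degree), are nonnegative; since each basis element is nonnegative on $[0,1]$, this immediately yields nonnegativity of the polynomial.

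The main obstacle is simply the bulk of the calculation: we are manipulating polynomials of degree in the forties with rational coefficients of large height, so the verification, though completely mechanical, is best performed with a computer algebra system. That the inequalities hold is not an accident but reflects the delicate tuning of $F$ in Lemma~\ref{lem:H1inc}: the coefficient $5/768$ was chosen precisely so that both factors on the right-hand side of \eqref{eq:Fsneak} are controlled by their endpoint values simultaneously. If the naive Bernstein-basis check fails on some coefficient, a natural fallback is to attempt the stronger claim that $x\mapsto P(x)/Q(x)$ is monotonically increasing on $[0,1]$, equivalently $P'Q-PQ'\geq 0$, which is again a polynomial inequality amenable to the same treatment (or to Sturm's algorithm for counting real zeros in the open interval).
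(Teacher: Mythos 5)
Your proposal is essentially the paper's own argument: both reduce the two endpoint bounds to the nonnegativity on $[0,1]$ of the explicit polynomials $P(1)Q(x)-Q(1)P(x)$ and $P(x)-Q(x)$ (using $P(0)=Q(0)=1$), factor out the forced zeros at the relevant endpoint, and certify positivity by a coefficient-sign check carried out in a computer algebra system. The only difference is cosmetic — the paper divides the first polynomial by $(1-x)^2$ and finds all monomial coefficients positive, and for the second uses a sign-pattern argument (first $28$ coefficients positive, the rest negative, with positive value at $x=1$) rather than your Bernstein-basis certificate.
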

\begin{proof}
	To prove the upper bound, it is equivalent to verify that
	\[R_1(x) := \frac{P(1)Q(x)-Q(1)P(x)}{(1-x)^2}\]
	is non-negative for $0\leq x\leq 1$. We claim that $R_1$ is a polynomial of degree $46$ with positive coefficients. Hence $R_1(x)\geq0$.
	
	To prove the lower bound, we note that since $P(0)=Q(0)=1$, it is equivalent to verify that $R_2(x) := P(x)-Q(x)$ is non-negative for $0 \leq x \leq 1$. Here we claim that $R_2$ is a polynomial of degree $48$ for which the first $28$ coefficients are positive and the rest are negative. Moreover, we claim that $R_2(1)>0$. By comparing coefficients and using that $x^j \geq x^k$ for $0\leq x \leq 1$ and $0\leq j \leq k$, we deduce from this that $R_2(x)\geq0$ for $0 \leq x \leq 1$.
	
	The claims on $R_1$ and $R_2$ can be easily verified using a computer algebra system. We checked them using \texttt{Maple} and \texttt{Mathematica}. 
\end{proof}

Returning to \eqref{eq:secondfrac} and recalling that $y=y(x)$ is increasing from $y(0)=0$ to $y(1)=1$, we get from \eqref{eq:phiyx}, \eqref{eq:PQ} and Lemma~\ref{lem:PQbounds} that 
\begin{equation}\label{eq:Fest2} 
	\frac{\|\varphi_0\|_{H^4(\mathbb{T}^2)}}{F(0)} \leq \frac{\|\varphi_y\|_{(H^4(\mathbb{T}^2))^\ast}}{F(y)} \leq \frac{\|\varphi_1\|_{H^4(\mathbb{T}^2)}}{F(1)}. 
\end{equation}
\begin{proof}
	[Final part in the proof of Theorem~\ref{thm:KY}] We begin with the proof of the estimates \eqref{eq:KYimp}. By Lemma~\ref{lem:flipy}, \eqref{eq:Fsneak}, \eqref{eq:Fest1}, and \eqref{eq:Fest2}, we obtain
	\[\frac{\|\varphi_0\|_{(H^4(\mathbb{T}^2))^\ast}}{\|\varphi_0\|_{H^1(\mathbb{T}^2)}} \leq \frac{\|\varphi\|_{(H^1(\mathbb{T}^2))^\ast}}{\|\varphi\|_{H^4(\mathbb{T}^2)}} \leq \frac{\|\varphi_1\|_{(H^4(\mathbb{T}^2))^\ast}}{\|\varphi_1\|_{H^1(\mathbb{T}^2)}},\]
	where as before $\varphi_y(z)=z_1+yz_2$ and $\varphi(z) = c_1z_1+c_2z_2$ for arbitrary $(c_1,c_2)\neq(0,0)$. These estimates are evidently sharp and the lower bound is equal to $1$. To obtain a numerical value for the upper bound, we first get $\|\varphi_1\|_{H^1(\mathbb{T}^2)}=4/\pi$ from Lemma~\ref{lem:strange}. Next we use \eqref{eq:phiyx}, recalling that $y=1$ corresponds to $x=1$, to establish that $\|\varphi_1\|_{(H^4(\mathbb{T}^2))^\ast}=6^{3/4}/3$. Hence
	\[\frac{\|\varphi_1\|_{(H^1(\mathbb{T}^2))^\ast}}{\|\varphi_1\|_{H^4(\mathbb{T}^2)}} = \frac{\pi}{2 \sqrt[4]{6}},\]
	and so the proof of \eqref{eq:KYimp} and part (a) is complete. 
	
	It remains to settle (b). Suppose that the estimate 
	\begin{equation}\label{eq:p4opt} 
		\|\varphi\|_{H^p(\mathbb{T}^2)} \leq \|\varphi\|_{(H^1(\mathbb{T}^2))^\ast} 
	\end{equation}
	holds for every $\varphi(z)=c_1z_1+c_2z_2$ for some $p\geq4$. By the Hahn--Banach theorem and the fact that $(L^1(\mathbb{T}^2))^\ast=L^\infty(\mathbb{T}^2)$, we get that 
	\begin{equation}\label{eq:HBL1inf} 
		\|\varphi\|_{(H^1(\mathbb{T}^2))^\ast} = \inf_{P\psi = \varphi} \|\psi\|_{L^\infty(\mathbb{T}^2)}. 
	\end{equation}
	For $0\leq \varepsilon < 1$, consider
	\[\psi(z): = z_2 \frac{(1-\varepsilon z_1\overline{z_2})^2}{|1-\varepsilon z_1 \overline{z_2}|^2} = z_2\Bigg(-\varepsilon z_1 \overline{z_2} + \left(1-\varepsilon^2\right) \sum_{j=0}^\infty \varepsilon^j (\overline{z_1} z_2)^j\Bigg).\]
	Clearly $\|\psi\|_{L^\infty(\mathbb{T}^2)}=1$. Moreover, the Riesz projection of $\psi$ is
	\[\varphi(z) := P\psi(z)= -\varepsilon z_1 + \left(1-\varepsilon^2\right) z_2.\]
	If $\varepsilon>0$ is so small that $\varepsilon \leq 1-\varepsilon^2$, then
	\[\|\varphi\|_{H^p(\mathbb{T}^2)} = \left(1-\varepsilon^2\right)\Bigg(\sum_{j=0}^\infty \binom{p/2}{j}^2 \left(\frac{\varepsilon}{1-\varepsilon^2}\right)^{2j}\Bigg)^\frac{1}{p} = 1+\left(\frac{p}{4}-1\right)\varepsilon^2 + O(\varepsilon^4).\]
	Hence we can obtain a contradiction to \eqref{eq:p4opt} from \eqref{eq:HBL1inf} whenever $p>4$ by choosing $\varepsilon>0$ sufficiently small. We conclude that part (b) also is true. 
\end{proof}

The proof of Theorem~\ref{thm:KY} (b) is adapted from the proof of a result of Marzo and Seip \cite[Thm.~1]{MS11}, which we shall now recall. Let $P \colon L^2(\mathbb{T})\to H^2(\mathbb{T})$ denote the Riesz projection on $\mathbb{T}$. The inequality 
\begin{equation}\label{eq:MS} 
	\|Pf\|_{H^p(\mathbb{T})} \leq \|f\|_{L^\infty(\mathbb{T})} 
\end{equation}
holds for every $f$ in $L^\infty(\mathbb{T})$ if and only if $p \leq 4$. It is also demonstrated in \cite{MS11} that \eqref{eq:MS} does not hold if $\mathbb{T}$ is replaced by $\mathbb{T}^2$ and $p=4$. 

Let $P_1$ denote the operator defined by \eqref{eq:P1}. The space $P_1 L^\infty(\mathbb{T}^2)$ is comprised of essentially bounded functions on $\mathbb{T}^2$ whose Fourier coefficients are supported on the straight line $\alpha_1+\alpha_2 =1$ in $\mathbb{Z}^2$. The lower bound in Theorem~\ref{thm:KY} and its optimality can be restated as follows. The inequality 
\begin{equation}\label{eq:restate} 
	\|Pf\|_{H^p(\mathbb{T}^2)} \leq \|f\|_{L^\infty(\mathbb{T}^2)} 
\end{equation}
holds for every $f$ in $P_1 L^\infty(\mathbb{T}^2)$ if and only if $p\leq 4$. Hence one can think of \eqref{eq:restate} as a partial analogue of \eqref{eq:MS} on $\mathbb{T}^2$.

Fix $1 \leq q \leq 2$. What is the largest $2 \leq p \leq 4$ such that
\[\|\varphi\|_{H^p(\mathbb{T}^2)} \leq \|\varphi\|_{(H^q(\mathbb{T}^2))^\ast}\]
holds for every $\varphi(z)=c_1z_1+c_2z_2$? 

By adapting the counter-example from \cite[Thm.~9]{BOSZ18} similarly to how we adapted the counter-example from \cite[Thm.~1]{MS11} in the proof of Theorem~\ref{thm:KY} (b), the necessary condition $p \geq 4/q$ can be established. By the lower bound in Theorem~\ref{thm:KY}, we know that this is sharp for $q=1$. It is also trivially sharp for $q=2$. Similarly, the answer to the following question is affirmative in the endpoint cases $q=1,2$.
\begin{question}
	Fix $1 < q < 2$. Is it true that
	\[1 \leq \frac{\|\varphi\|_{(H^q(\mathbb{T}^2))^\ast}}{\|\varphi\|_{H^{4/q}(\mathbb{T}^2)}} \leq 2 \binom{q}{q/2}^{-1/q} \binom{4/q}{2/q}^{-q/4}\]
	for every $\varphi(z)=c_1z_1 + c_2 z_2$ with $(c_1,c_2)\neq(0,0)$? 
\end{question}

The upper bound in the question is obtained by setting $c_1=c_2=1$. To compute the ratio in this case, we first use \cite[Lem.~5]{Brevig19} to see that
\[\|z_1+z_2\|_{(H^q(\mathbb{T}^2))^\ast} = 2 \|z_1+z_2\|_{H^q(\mathbb{T}^2)}^{-1}\]
and then Lemma~\ref{lem:strange} twice.

\bibliographystyle{amsplain} 
\bibliography{kyineq}

\providecommand{\bysame}{\leavevmode\hbox to3em{\hrulefill}\thinspace}
\providecommand{\MR}{\relax\ifhmode\unskip\space\fi MR }
\providecommand{\MRhref}[2]{%
  \href{http://www.ams.org/mathscinet-getitem?mr=#1}{#2}
}
\providecommand{\href}[2]{#2}
\begin{thebibliography}{1}

\bibitem{Brevig19}
Ole~Fredrik Brevig, \emph{Linear functions and duality on the infinite
  polytorus}, Collect. Math. \textbf{70} (2019), no.~3, 493--500.

\bibitem{BOSZ18}
Ole~Fredrik Brevig, Joaquim Ortega-Cerd\`a, Kristian Seip, and Jing Zhao,
  \emph{Contractive inequalities for {H}ardy spaces}, Funct. Approx. Comment.
  Math. \textbf{59} (2018), no.~1, 41--56.

\bibitem{DurenInv}
Peter Duren, \emph{Invitation to classical analysis}, Pure and Applied
  Undergraduate Texts, vol.~17, American Mathematical Society, Providence, RI,
  2012.

\bibitem{Duren70}
Peter~L. Duren, \emph{Theory of {$H^{p}$} spaces}, Pure and Applied
  Mathematics, Vol. 38, Academic Press, New York-London, 1970.

\bibitem{KY20}
Leonid~V. Kovalev and Xuerui Yang, \emph{Near-isometric duality of {H}ardy
  norms with applications to harmonic mappings}, J. Math. Anal. Appl.
  \textbf{487} (2020), no.~2, 124040, 13.

\bibitem{MS11}
Jordi Marzo and Kristian Seip, \emph{{$L^\infty$} to {$L^p$} constants for
  {R}iesz projections}, Bull. Sci. Math. \textbf{135} (2011), no.~3, 324--331.

\bibitem{Rudin69}
Walter Rudin, \emph{Function theory in polydiscs}, W. A. Benjamin, Inc., New
  York-Amsterdam, 1969.

\end{thebibliography}

\end{document}